\documentclass[12pt]{amsart}
\usepackage{amsfonts, amsmath, amsthm}
\usepackage{graphicx}

\newtheorem{pro}{Proposition}
\newtheorem{thm}[pro]{Theorem}
\newtheorem{lem}[pro]{Lemma}

\newtheorem{question}[pro]{Question}

\theoremstyle{definition}
\newtheorem{dfn}[pro]{Definition}
\newtheorem{dfns}[pro]{Definitions}
\newtheorem{rmkk}[pro]{Remark}

\theoremstyle{remark}

\newcommand{\s}{\Sigma}

\newcommand{\ie}{{\it i.e.}}

\newcommand{\del}{\partial}

\title[reducibility of Heegaard splittings]
{Manifolds admitting both strongly irreducible and weakly reducible
  minimal genus {H}eegaard splittings}

\date{\today} \address{Department of Mathematics, Nara Women's University
Kitauoya Nishimachi, Nara 630-8506, Japan} \address{Department of mathematical
Sciences, University of Arkansas, Fayetteville, AR 72701}
\email{tsuyoshi@cc.nara-wu.ac.jp} \email{yoav@uark.edu} \author{Tsuyoshi
Kobayashi} \author{Yo'av Rieck} \thanks{The first names author was supported
by Grant-in-Aid for scientific research, JSPS grant number 19540083. The
second named author was supported in part JSPS (fellow number P00024) and by
the 21st century COE program ``Constitution for wide-angle mathematical basis
focused on knots" (Osaka City University); leader: Akio Kawauchi.}

\begin{document}

\subjclass{57M99, 57M25}%
\keywords{3-manifold, Heegaard splittings, strong irreducibility, weak reducibility}%

\date{\today}%

\begin{abstract}
We construct infinitely many manifolds admitting both strongly irreducible and
weakly reducible minimal genus Heegaard splittings.  Both closed manifolds and
manifolds with boundary tori are constructed.
\end{abstract}

\maketitle


The pioneering work of Casson and Gordon \cite{casson-gordon} shows that a
minimal genus Heegaard splitting of an irreducible, non-Haken 3-manifold is
necessarily strongly irreducible; by contrast, Haken \cite{haken} showed that
a minimal genus (indeed, any) Heegaard splitting of a composite 3-manifold is
necessarily reducible, and hence weakly reducible.  The following question of Moriah
\cite{moriah-top-appl-2004} is therefore quite natural:

\begin{question}[\cite{moriah-top-appl-2004}, Question~1.2]
Can a 3-manifold $M$ have both weakly reducible and strongly irreducible
minimal genus Heegaard splittings?
\end{question}

We answer this question affirmatively:

\begin{thm}
\label{thm:WRandSIHS}
There exist infinitely many closed, orientable 3-manifolds of Heegaard genus
3, each admitting both strongly irreducible and weakly reducible
minimal genus Heegaard splittings.
\end{thm}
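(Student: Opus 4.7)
The plan is to realize each target manifold $M$ as a toroidal gluing $M = M_1 \cup_T M_2$ along an essential torus $T$, where $M_1$ and $M_2$ are carefully chosen pieces each of Heegaard genus $2$. By Schultens' amalgamation procedure, the two genus-$2$ Heegaard splittings combine across $T$ to produce a genus-$3$ Heegaard splitting of $M$, which is automatically weakly reducible: compressing disks on the two sides can be chosen in disjoint collars of $T$. This furnishes the required weakly reducible minimal genus splitting, once we know the genus is $3$.

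To produce a strongly irreducible genus-$3$ splitting of the \emph{same} $M$, I would choose $M_1$ to be the exterior of a knot $K_1$ admitting a primitive meridian with respect to its genus-$2$ splitting, and construct an alternative splitting via a swallow--follow tube. Concretely, attach a tube running parallel to $T$ into a genus-$2$ handlebody of $M_2$ to form a genus-$3$ handlebody whose complement in $M$ is again a handlebody. The primitive meridian hypothesis on $K_1$ ensures this second splitting genuinely sits at genus $3$ and is not a stabilization. Strong irreducibility would be argued directly: any pair of disjoint compressing disks on opposite sides, surgered along $T$, would contradict either the incompressibility of $T$ in $M$ or the atoroidality/irreducibility of the pieces $M_i$.

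The main obstacle, and the step I expect to absorb most of the work, is verifying that the Heegaard genus of $M$ is \emph{exactly} $3$, i.e.\ ruling out any genus-$\leq 2$ splitting. The natural weapon is the essential torus $T$ together with a Scharlemann--Thompson-style thin-position analysis: any hypothetical low-genus splitting could be isotoped so that its interaction with $T$ is controlled, and this would force one of the $M_i$ to admit a Heegaard splitting of genus $\leq 1$ relative to its boundary torus, contradicting the choice of $M_1$ and $M_2$. Combined with Morimoto--Schultens-type additivity of Heegaard genus under toroidal amalgamation in the degenerate cases, this should pin the Heegaard genus of $M$ at $3$.

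To obtain infinitely many non-homeomorphic examples, I would vary the Dehn-filling slopes used to close off any remaining boundary components (or vary $K_1$ inside an infinite family of knots with primitive meridians) and distinguish the resulting closed manifolds by an elementary invariant, for instance the order of $H_1(M;\mathbb{Z})$ or, when the pieces are hyperbolic, the hyperbolic volume of the non-Seifert piece in the JSJ decomposition.
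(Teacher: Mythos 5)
Your plan for the weakly reducible splitting is essentially the paper's: amalgamate two genus-$2$ splittings across an essential torus of the JSJ decomposition (the paper uses $E(3_1)\cup E(L)$ glued to $E(4_1)$) and observe that amalgamations are weakly reducible. The genuine gap is in your strongly irreducible splitting. First, a surface obtained by running a tube parallel to $T$ into a handlebody of $M_2$ is exactly the kind of surface that tends to be weakly reducible: the cocore of the tube is a compressing disk on one side, and it is generically disjoint from a compressing disk on the other side; you give no mechanism that prevents this. Second, your proposed proof of strong irreducibility cannot work as stated: compressing a weakly reducible genus-$3$ splitting (Casson--Gordon) produces an essential closed surface in $M$, and since $M$ is toroidal \emph{by construction} -- it contains the essential torus $T$ you glued along -- the existence of such a surface contradicts nothing. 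You cannot appeal to atoroidality of the pieces, because the essential surface lives in $M$, not in a piece. The paper's route is structurally different here: it splices the exterior of a nontrivial, non-Hopf $2$-bridge link $E(L)$ between $E(3_1)$ and $E(4_1)$, builds the Heegaard surface from the bridge sphere of $L$ together with essential annuli/once-punctured tori in the outer pieces, and proves strong irreducibility by a direct intersection analysis of meridian disks with the decomposing annuli (Proposition~\ref{pro:construct-SI}, taken from Kobayashi's earlier work). Some such positivity-of-intersection argument, anchored in the complexity of $L$, is what is actually needed, and your proposal contains no substitute for it.

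A secondary but real issue is the lower bound $g(M)\geq 3$. Your thin-position sketch assumes the hypothetical genus-$2$ splitting untelescopes along the torus $T$ into two one-boundary-component pieces, but a strongly irreducible genus-$2$ splitting of a toroidal manifold need not arise this way, and the manifold here has a JSJ decomposition with three pieces and two tori, so the case analysis is more involved than you indicate. The paper avoids this entirely by quoting Kobayashi's classification of genus-$2$ Haken manifolds with nontrivial torus decomposition and checking that $M$ is not on the list. Your idea of distinguishing infinitely many examples by homology or by the volume of the hyperbolic JSJ piece is fine and matches the paper's device of varying the hyperbolic $2$-bridge link $L$.
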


Theorem~\ref{thm:WRandSIHS} is proved in Section~\ref{sec:proof}.  In Remark \ref{rmk:generalize} we offer a strategy to generalize Theorem \ref{thm:WRandSIHS} to construct examples of genus $g$, for each $g \geq 3$; it is easy to see that no such examples can exist if $g < 3$.  In Section \ref{sec:boundded} we give examples of manifolds with one, two or three torus boundary components, each admitting both strongly irreducible and weakly reducible minimal genus Heegaard splittings.  Moreover, for each manifold with two boundary components, we construct four minimal genus Heegaard surfaces, two weakly reducible, one separating the boundary components and one that does not, and similarly two strongly irreducible minimal genus Heegaard surfaces.  For a precise statement, see Theorem \ref{thm:boundded}.

In an effort to keep this article short we refer the reader to Section 2 of
\cite{KR-CAG} for definitions and background material.  Unless otherwise stated we
follow the notation of that paper.

\section{Preliminaries}
\label{sec:prelim}

\subsection{Constructing strongly irreducible Heegaard splittings}

In this subsection we introduce a method for constructing strongly irreducible Heegaard splittings
using 2-bridge link exteriors; this is taken out of \cite{kobayashi-polynomial-growth}.

\begin{dfns}
\begin{enumerate}
\item A {\it 2-string tangle} $(B^3;t_1,t_2)$ is a pair of 3-ball $B^3$ and two
disjoint arcs $t_1$, $t_2$ properly embedded in $B^3$.

\item A tangle is called {\it 2-string trivial tangle} if it is
homeomorphic (as a triple) to $(D^2 \times [0,1]; \{ p \} \times [0,1], \{ q \}
\times [0,1])$, where $D^2$ is a 2-disk and $p,q$ are two distinct
points in $\mbox{int}(D^2)$.
\end{enumerate}
\end{dfns}

Let $(B^3;t_1,t_2)$ be a 2-string trivial tangle. Let $H =
\mbox{cl}(B^3 \setminus (N(t_1) \cup N(t_2)))$, and $A_i =
\mbox{Fr}_{B^3}(N(t_i))$, $i=1,2$.  Note that $H$ is a genus 2
handlebody, $A_1$, $A_2$ are annuli in $\del H$ and the pair
$\{A_1,A_2\}$ is primitive in $H$ (see Figure~1), \ie, there
exist pairwise disjoint meridian disks $\Delta_1$, $\Delta_2
\subset H$ so that:

\begin{enumerate}
\item $\Delta_i \cap A_i$ is an essential arc in $A_i$ ($i=1,2$),
and
\item $\Delta_1 \cap A_{2}, \ \Delta_2 \cap A_1 = \emptyset$.
\end{enumerate}

\begin{figure}[ht]
\begin{center}
\includegraphics[width=6cm, clip]{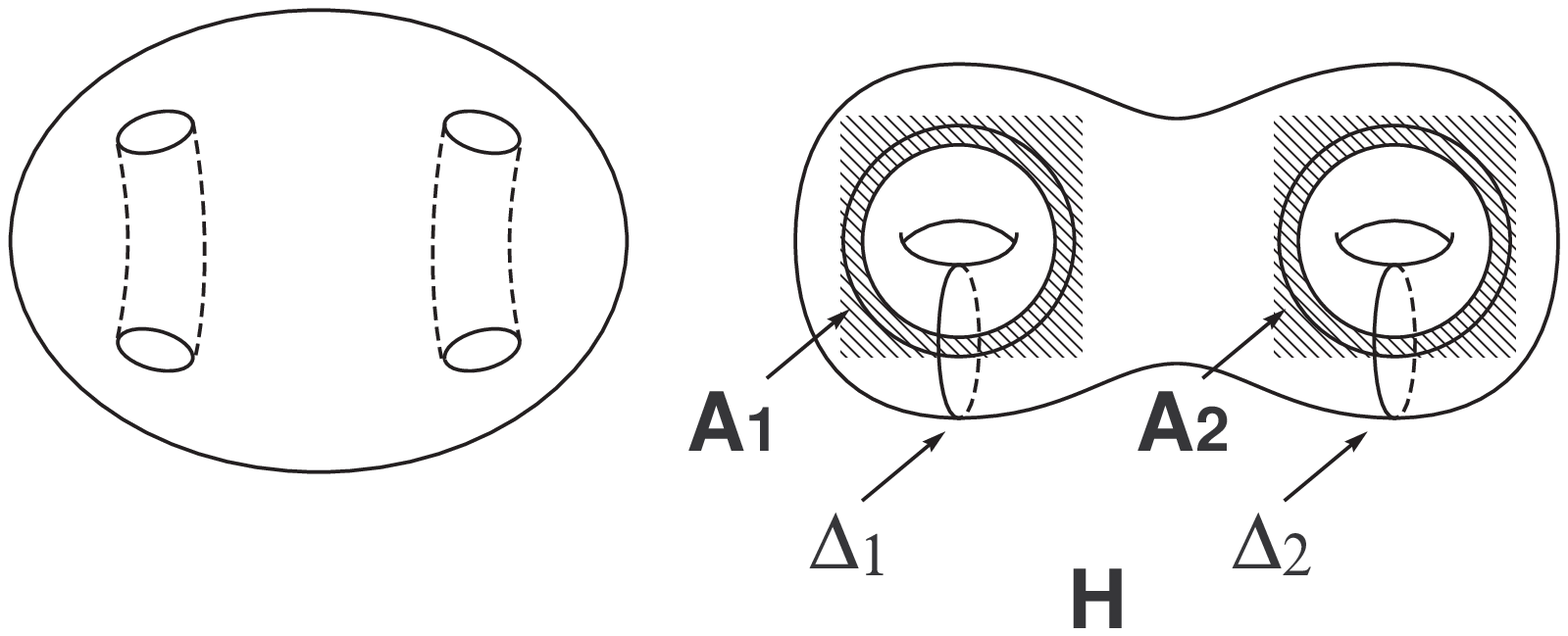}
\end{center}
\begin{center}
Figure 1.
\end{center}
\end{figure}

A link $L \subset S^3$ is called a {\it 2-bridge link} if it can be expressed
as the union of two 2-string trivial tangles; more precisely, if $(S^3; L) =
(B;t_1,t_2) \cup (B';t'_1,t'_2)$, where $(B;t_1,t_2)$ and $(B';t'_1,t'_2)$ are
2-string trivial tangles, $B \cap B' = \del B = \del B'$ and $L = (t_1 \cup
t'_1) \cup (t_2 \cup t'_2)$.  Note that in this paper by a 2-bridge link we
always mean a two component link, and not a 2-bridge knot.

Let $(H,A_1\cup A_2)$ be as above and  $(H',A'_1 \cup A'_2)$ be a copy of
$(H,A_1\cup A_2)$, $P = \mbox{cl}(\del H \setminus (A_1 \cup A_2))$ and
similarly  $P' = \mbox{cl}(\del H' \setminus (A'_1 \cup A'_2))$.  Let $L$
be a 2-bridge link.  Then we see from the above that there exists a homeomorphism $h:P \to P'$ such
that $E(L)$, the exterior of $L$, is homeomorphic to $H \cup_h H'$ and $\del E(L) = (A_1 \cup A'_1)
\cup (A_2 \cup A'_2)$, so that $\del A_i$, $\del A'_i$ are meridian curves
($i=1,2$).  The image of $P = P'$ in $E(L)$ is called a {\it bridge sphere}.

Let $N$ be a (possibly disconnected) orientable, irreducible,
$\del$-irreducible 3-manifold such that $\del N$ consists of two tori $T_1$
and $T_2$ and each component of $N$ has non empty boundary (hence, $N$
consists of at most two components).  Suppose that there exists a
3-dimensional submanifold $R \subset N$ such that:

\begin{enumerate}
\item Each component of $R$ is a handlebody, and $\mbox{Fr}_N(R)$ is
incompressible in $N$.

\item $T_i \cap R$ ($i=1,2$) consists of an annulus, say $\mathcal{A}_i$, such
that

    \begin{enumerate}
    \item $\mathcal{A}_i$ is incompressible in $N$, and---
    \item $\mathcal{A}_i$ is $\del$-incompressible in $R$ (\ie , there does
    not exist a disk properly embedded in $R$ that intersects $\mathcal{A}_i$
    in an essential arc).
    \end{enumerate}

\item Each component of $\mbox{cl}(N \setminus R) = R'$ is a handlebody such that
$T_i \cap R'$ ($i=1,2$) consists of an annulus, say $\mathcal{A}'_i$ satisfying:

    \begin{enumerate}
    \item $\mathcal{A}'_i$ is incompressible in $N$, and---
    \item $\mathcal{A}'_i$ is $\del$-incompressible in $R'$.
    \end{enumerate}

\end{enumerate}

With notation as above,
let $M$ be the 3-manifold obtained from $E(L)$ and $N$ by identifying their
boundary by an orientation reversing homeomorphism $\del N \to \del (E(L))$
such that $\mathcal{A}_i$ ($\mathcal{A}'_i$ resp.) is mapped to $A_i$ ($A'_i$
resp.). Let $V = H \cup R  \subset M$ and similarly $V' = H' \cup R' \subset
M$.  Since $A_1 \cup A_2$ ($A'_1 \cup A'_2$ resp.) is primitive in $H$ ($H'$
resp.), we see that $V$ ($V'$ resp.) is a handlebody obtained from $R$ ($R'$
resp.) by attaching a 1-handle (see Figure~2), and therefore $V \cup V'$ is a
Heegaard splitting of $M$.
\begin{figure}[ht]
\begin{center}
\includegraphics[width=8cm, clip]{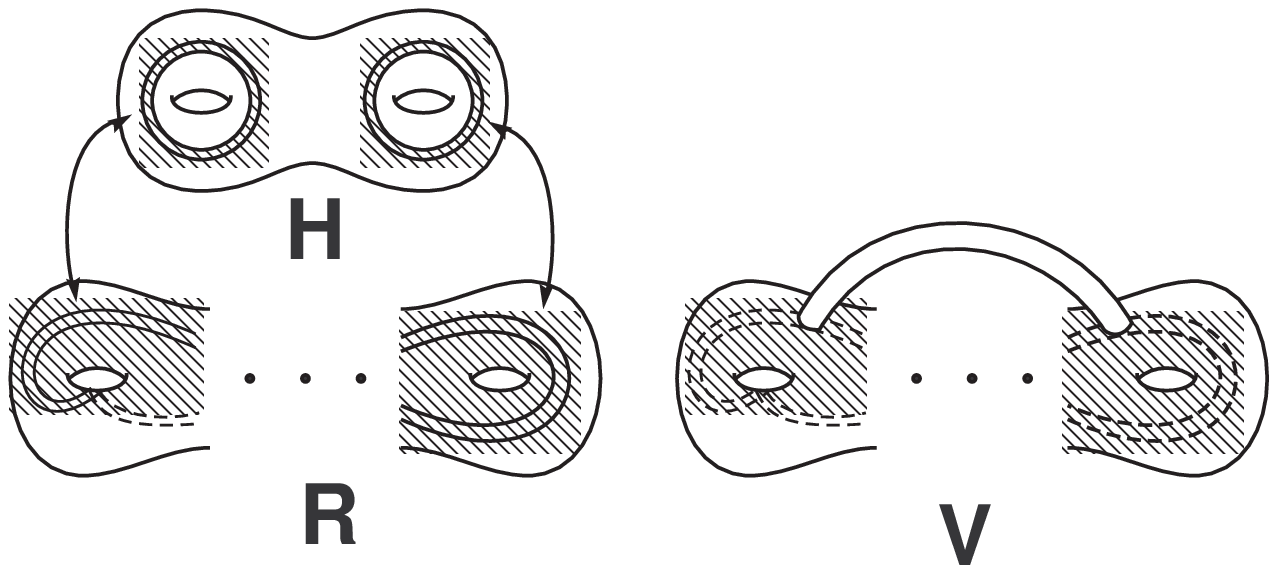}
\end{center}
\begin{center}
Figure 2.
\end{center}
\end{figure}
For this Heegaard splitting the following holds:

\begin{pro}
\label{pro:construct-SI}
With notation as above, if $L$ is not the trivial link or the Hopf link, then the Heegaard
splitting $V \cup V'$ is strongly irreducible.
\end{pro}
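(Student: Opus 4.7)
The plan is to argue by contradiction, so assume $V \cup V'$ is weakly reducible and fix disjoint compressing disks $D \subset V$ and $D' \subset V'$. The first step establishes the ambient geometry. Under the hypotheses, $E(L)$ is irreducible with $T_1, T_2$ incompressible (since $L$ is neither trivial nor Hopf), and the gluing to $N$ yields a manifold $M$ that is irreducible and in which $T_1 \cup T_2$ remains incompressible. I would then isotope each of $D, D'$ independently, using a standard innermost-disk argument (irreducibility of $M$ removes inessential intersection curves, and incompressibility of $T_1 \cup T_2$ rules out essential ones), to achieve $D \cap (T_1 \cup T_2) = \emptyset = D' \cap (T_1 \cup T_2)$. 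Hence each of $D, D'$ lies entirely in $E(L)$ or entirely in $N$.

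If at least one disk, say $D'$, lies in $N$, then $D' \subset V' \cap N = R'$ with $\del D' \subset \mbox{Fr}_N(R')$. Since $\mbox{Fr}_N(R')$ is a subsurface of $\s$, any curve on $\mbox{Fr}_N(R')$ that bounds a disk there also bounds a disk in $\s$; so $\del D'$ being essential on $\s$ forces it to be essential on $\mbox{Fr}_N(R')$. Then $D'$ is a compressing disk for $\mbox{Fr}_N(R')$ in $N$, contradicting the incompressibility hypothesis. The symmetric argument rules out $D \subset N$, so both $D$ and $D'$ must lie in $E(L)$.

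In the remaining case $D \subset H$ and $D' \subset H'$ are disjoint disks with $\del D, \del D' \subset P$, exhibiting the bridge splitting $E(L) = H \cup_P H'$ as weakly reducible. Before applying strong irreducibility of the bridge sphere, I would verify that $\del D$ is essential on $P$: if $\del D$ bounded a disk in $P$, the resulting sphere in $M$ would bound a ball by irreducibility, forcing $\del D$ to bound a disk in $\s$ and contradicting essentiality on $\s$; and if $\del D$ were parallel in $P$ to a component of $\del P$, then capping $D$ off by the parallelism annulus would give a compressing disk for the corresponding torus $T_i$ in $E(L)$, contradicting incompressibility of $T_i$ in $E(L)$. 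The same check applies to $\del D'$, so the bridge splitting is genuinely weakly reducible — contradicting the well-known fact that the bridge sphere of the exterior of a non-trivial non-Hopf 2-bridge link is strongly irreducible.

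The main obstacle is not a deep idea but the careful bookkeeping of this last step: correctly relating essentiality on the closed surface $\s$ to essentiality on the subsurfaces $P$ and $\mbox{Fr}_N(R^{(\prime)})$, and eliminating the boundary-parallel possibility on $P$ via the incompressibility of the tori. The substantive inputs are the strong irreducibility of the 2-bridge bridge sphere and the incompressibility of $\mbox{Fr}_N(R)$, $\mbox{Fr}_N(R')$ built into the hypotheses; the rest is arranging the compressing disks so that these inputs can be applied.
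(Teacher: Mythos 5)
There is a genuine gap, and it occurs at the very first step: the claim that $D$ and $D'$ can be isotoped off $T_1 \cup T_2$. The tori $T_1,T_2$ are \emph{not} disjoint from the Heegaard surface: $T_i$ meets $\s$ in the two circles $\del A_i = \del A_i'$, so $T_i \cap V$ is the properly embedded annulus $A_i$ (with $\del A_i \subset \del V = \s$), and a compressing disk $D \subset V$ meets $T_i$ in circles \emph{and arcs}. Your innermost-disk argument (irreducibility kills inessential circles, incompressibility of $T_i$ kills essential ones) disposes only of the circles. An outermost essential arc of $D \cap A_i$ cuts off a sub-disk $\delta \subset D$ that is a boundary-compression of $A_i$ to one side; hypothesis 2(b)/3(b) rules out $\delta \subset R$, but boundary-compressions of $A_i$ into $H$ genuinely exist --- indeed the primitivity of $\{A_1,A_2\}$ in $H$ supplies one via $\Delta_i$ --- so such arcs cannot be removed. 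Concretely, the disk $D_\alpha$ built in Lemma~\ref{lem:SimCores} (a meridian disk of the solid torus $N_1$ capped off with two copies of $\Delta_1$) is a compressing disk of $V$ that essentially intersects $T_1$ and cannot be pushed off it. A telltale sign of the problem is that your argument never invokes the $\del$-incompressibility of $\mathcal{A}_i$ in $R$ and $R'$; those hypotheses are there precisely because the disks cannot be assumed disjoint from the tori.

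Handling these arcs is the actual content of the paper's proof: it minimizes $D \cap (A_1 \cup A_2)$ and $D' \cap (A_1' \cup A_2')$ and splits into three cases according to whether these intersections are empty. Your argument covers only the first case (both empty), where the reduction to the bridge sphere of $E(L)$ and to the incompressibility of $\mbox{Fr}_N(R)$ is correct and matches the paper. In the remaining cases the paper takes outermost sub-disks $\delta \subset D$, $\delta' \subset D'$, uses the $\del$-incompressibility of the annuli in $R$, $R'$ to force $\delta \subset H$, $\delta' \subset H'$, and then uses the hypothesis that $L$ is neither trivial nor Hopf to show $|\delta \cap \delta'| \geq 1$, so the pair is not weakly reducing. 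To repair your proof you would need to add exactly this outermost-arc analysis; the reduction you propose does not exist.
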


\begin{proof}[Sketch of proof]
Proposition \ref{pro:construct-SI} is identical to Proposition~3.1 of \cite{kobayashi-polynomial-growth} and the proof can be found there.  For the convenience of the reader, we sketch it here.  Let $D \subset V$ and $D' \subset V'$ be a pair of meridian disks.  Minimize the intersection of $D$ with $A_1 \cup A_2$, and the intersection of $D'$ with $A'_1 \cup A'_2$.  By symmetry, we have the following three cases:
\begin{enumerate}
\item $D \cap (A_1 \cup A_2) = \emptyset$ and $D' \cap (A'_1 \cup A'_2) = \emptyset$
\item $D \cap (A_1 \cup A_2) = \emptyset$ and $D' \cap (A'_1 \cup A'_2) \neq \emptyset$
\item $D \cap (A_1 \cup A_2) \neq \emptyset$ and $D' \cap (A'_1 \cup A'_2) \neq \emptyset$
\end{enumerate}
In the first case, $D$ (resp. $D'$) is the meridian disks of the tangles $(B;t_1,t_2)$ (resp. $(B';t'_1,t'_2)$); since $L$ is not the trivial link or the Hopf link, $D$ intersects $D'$ more than twice.  In the second case, $D$ is the meridian disk of the tangles $(B;t_1,t_2)$.  Consider an  outermost disk on $D'$, say $\delta'$. Note that $\delta' \subset H'$. If the arc of  $\delta'$ on $A_1'$ or $A_2'$ is inessential, we can pinch it off; the proof now is the same as the first case.  Else, $\delta'$ gives a boundary compression for $A_1'$ or $A_2'$.  Again, since $T$ is not the trivial link or the Hopf link, we see that $|D' \cap D| \geq |\delta' \cap D| >1$.

In the third case, we consider outermost disks, $\delta$ on $D$, and
$\delta'$ on $D'$. If the arc of $\delta$ on $A_1$ or $A_2$ is inessential,
or the arc of $\delta'$ on $A_1'$ or $A_2'$ is inessential, then arguments
similar to the above work.
Suppose $\delta$ on $A_1$ or $A_2$, and $\delta'$ on $A_1'$ or $A_2'$ are
essential. Since $L$ is not the trivial link or the Hopf link, we see that
$|D' \cap D| \geq |\delta' \cap \delta| \geq 1$.

\end{proof}

\subsection{Spines of amalgamated Heegaard splittings.}

A  {\it spine} of a compression body $C$ is a graph $\lambda$
embedded in $C$ so that $C \setminus (\lambda \cup \del_- C)$ is homeomorphic to $\del_+ C \times (-\infty,0]$.
Let $C \cup C'$ be a Heegaard splitting of a manifold $M$; a graph $\Gamma\subset M$ is a {\it spine for} $C$ if there exists an ambient isotopy of  $M$ so that the image of $\Gamma$ after this isotopy is contained in $C$ as a spine.
{\it Simultaneous spines of} $C \cup C'$ are two disjointly embedded graphs $\Gamma$, $\Gamma' \subset M$, so that after an ambient isotopy of $M$ the image of $\Gamma$ ($\Gamma'$ resp.) is contained in $C$ ($C'$ resp.) as a spine.  For the definition of amalgamation of Heegaard splitting see \cite{schultens}.

\begin{pro}
\label{pro:SpaineAfterAmalgamation}
Let $M_1$ and $M_2$ be manifolds so that $\del M_1$ and $\del M_2$ are connected and homeomorphic.  For $i=1,2$, let $H_i \cup C_i$ be Heegaard splittings of $M_i$, where $H_i$ is a handlebody and $C_i$ a compression body.  Let $\mu_i$ (resp. $\lambda_i$) be a spine of $H_i$ (resp. $C_i$).  Let $M$ be a manifold obtained by gluing $M_1$ and $M_2$ along their boundaries.  Let $H \cup H'$ be the amalgamation of $H_1 \cup C_1$ and $H_2 \cup C_2$.

Then there exist simultaneous spines of $H \cup H'$ so that $\mu_1 \cup \lambda_2$ is contained in a spine of $H$ or $H'$, and $\mu_2 \cup \lambda_1$ is contained in a spine of the other.
\end{pro}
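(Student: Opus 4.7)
My plan is to unpack Schultens' amalgamation construction explicitly and to track the spines through each step. Denote the gluing surface by $F = \del M_1 = \del M_2 = \partial_- C_1 = \partial_- C_2$. After a preliminary isotopy, I may assume that each $\lambda_i$ is a disjoint union of arcs meeting $F$ only at their endpoints (the dual cores of the 1-handles of $C_i$). Choose pairwise disjoint arcs $\alpha = \bigcup_j \alpha_j \subset C_2 \cup H_2$ running from each point of $\lambda_1 \cap F$ first transversely across the product structure of $C_2$, then into $H_2$ to end at a point on $\mu_2$; symmetrically choose pairwise disjoint arcs $\beta = \bigcup_\ell \beta_\ell \subset C_1 \cup H_1$ from each point of $\lambda_2 \cap F$ across $C_1$ and into $H_1$ to end on $\mu_1$. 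By general position in the 3-manifold $M$, one may further arrange that $\alpha \cap \lambda_2 = \emptyset$ and that $\beta \cap (\lambda_1 \cup \alpha) = \emptyset$.

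Set $\Gamma := \mu_2 \cup \alpha \cup \lambda_1$ and $\Gamma' := \mu_1 \cup \beta \cup \lambda_2$. My next step is to identify regular neighborhoods $N(\Gamma)$ and $N(\Gamma')$, up to isotopy, with the handlebodies $H$ and $H'$ of the amalgamated splitting. Indeed, $N(\Gamma)$ is obtained from $H_2 = N(\mu_2)$ by attaching one 1-handle for each edge of $\lambda_1$, the core of that 1-handle being the given edge together with the $\alpha$-arcs extending its two endpoints into $\mu_2 \subset H_2$; this is precisely Schultens' amalgamated handlebody (see \cite{schultens}). The complement $\mbox{cl}(M \setminus N(\Gamma))$ likewise deformation retracts onto $\mu_1 \cup \lambda_2 \cup \beta = \Gamma'$ and is the other amalgamated handlebody.

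Since $\Gamma$ is by construction a spine of $H$ and contains $\mu_2 \cup \lambda_1$, and $\Gamma'$ is a spine of $H'$ containing $\mu_1 \cup \lambda_2$, and since $\Gamma$ and $\Gamma'$ were chosen disjointly embedded in $M$, they serve as the desired simultaneous spines. The main obstacle is not any deep difficulty but careful bookkeeping inside the amalgamation: one must track which 1-handles of $H$ arise from which edges of $\lambda_1$, and verify that the pushing arcs $\alpha, \beta$ can be simultaneously chosen disjoint from each other and from $\lambda_1 \cup \lambda_2$ while still landing on $\mu_2$ and $\mu_1$ respectively. Once this is set up, the identification of $N(\Gamma)$ and $N(\Gamma')$ with $H$ and $H'$, and hence the proposition, is immediate.
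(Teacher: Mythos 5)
Your proof is correct and follows essentially the same route as the paper's: both rest on the observation that each amalgamated handlebody is (isotopic to) $H_i$ together with the $1$-handles whose cores are the edges of $\lambda_{3-i}$, so that a spine is obtained by joining $\mu_i$ to $\lambda_{3-i}$ by connecting arcs running through the product regions. The only difference is one of direction -- you build $\Gamma$, $\Gamma'$ first and identify their regular neighborhoods with the amalgamated handlebodies, while the paper starts from the amalgamated handlebody and recovers $H_1$ and $N(\lambda_2)$ by compressing along the disks $\mathrm{cl}(\mathrm{int}(H)\cap F)$ -- and both arguments implicitly normalize each $\lambda_i$ to a union of arcs meeting $F$ at their endpoints.
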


\begin{proof}
We denote the image of $\del M_i$ in $M$ by $F$, the image of $\mu_i$ in $M$ by $\mu_i$, and the image of $\lambda_i$ in $M$ by
$\lambda_i$.
By transversality, we assume as we may that $\lambda_1 \cap \lambda_2 = \emptyset$.  The Heegaard surface that gives amalgamation of $H_1 \cup C_1$ and $H_2 \cup C_2$ is given by tubing $F$ along $\lambda_1$ into $M_1$ and along $\lambda_2$ into $M_2$, see Figure~3
(this figure is based on Schultens' \cite[Figure~3]{schultens}).
Note that the intersection of $F$ and the amalgamated Heegaard surface is {\it not}
transverse.

\begin{figure}[ht]
\begin{center}
\includegraphics[width=8cm, clip]{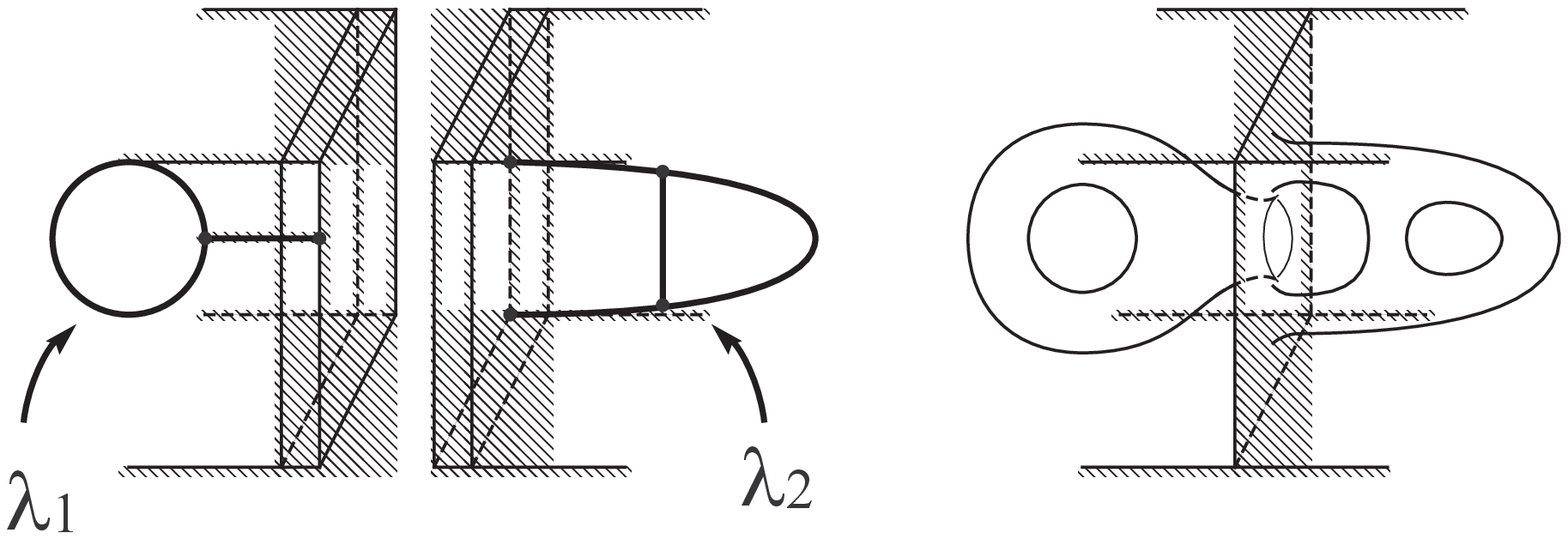}
\end{center}
\begin{center}
Figure 3.
\end{center}
\end{figure}

We may suppose that $\mu_1 \cup \lambda_2$ is contained in $H$ and $\mu_2 \cup \lambda_1$ is contained in $H'$.  By compressing $H$ along the disks $\mbox{cl}(\mbox{int}(H) \cap F)$ we obtain two handlebodies.  One handlebody is isotopic to $H_1$ and so we may take $\mu_1$ as its spine.  The other handlebody contains $\lambda_2$ and admits a
deformation retract onto it; moreover, $\lambda_2$ intersects each disk of $\mbox{cl}(\mbox{int}(H) \cap F)$ in exactly one point and has no other intersections with the boundary of this handlebody.  Since the two handlebodies were obtained from $H$ by compressing along
the disks $\mbox{cl}(\mbox{int}(H) \cap F)$, it is easy to construct a spine for $H$ by connecting $\lambda_2$ to $\mu_1$.
$H'$ is treated similarly; the proposition follows.
\end{proof}

\section{Proof of Theorem~\ref{thm:WRandSIHS}}
\label{sec:proof}

We adopt the notation of Section~\ref{sec:prelim}.

Let $3_1$ be the trefoil knot and $4_1$ the figure eight knot. Let $L = l_1
\cup l_2$ be a hyperbolic 2-bridge link.  Denote
$\del N(l_i)$ by $T_i$ ($i=1,2$).

We note that there exists an essential annulus $\bar{A}$ in $E(3_1)$ such that
the closures of the components of $E(3_1) \setminus \bar{A}$ are solid tori,
say $N_1$ and $N'_1$, where $\bar{A}$ wraps around $N_1$ longitudinally twice,
and around $N_1'$ longitudinally three times.  Hence, $N_1 \cap \del E(3_1)$ and
$N_1' \cap \del E(3_1)$ are incompressible and boundary incompressible.
On the other hand, we note that $4_1$ is a genus 1
fibered knot.  Hence we have the following: 
let $S \subset E(4_1)$ be a minimal genus Seifert surface for $4_1$ (note that $S$ is a once punctured torus). 
Let $N_2 = N(S)$ and $N'_2 = \mbox{cl}(E(4_1) \setminus N_2)$. Then
$N_2$ ($N'_2$ resp.) is homeomorphic to $S \times [0,1]$, where $N_2 \cap \del
E(4_1)$ ($N'_2 \cap \del E(4_1)$ resp.) corresponds to $\del S \times
[0,1]$. Note that $S \times [0,1]$ is homeomorphic to a genus 2 handlebody,
and $\del S \times [0,1]$ is incompressible and $\del$-incompressible in $S \times [0,1]$. Let $P$ be a bridge sphere in $E(L)$. 
Then as in Section~\ref{sec:prelim}, $P$ separates $E(L)$ into two genus 2
handlebodies, called $H$ and $H'$. Finally, let $M$ be a 3-manifold obtained from
$E(3_1) \cup E(4_1)$ and $E(L)$ by identifying their boundaries by a
homeomorphism $h:(\del E(3_1) \cup \del E(4_1)) \to \del E(L) (=T_1 \cup T_2)$
so that $h$ satisfies the following conditions:

\begin{enumerate}
\item $h(N_1 \cap \del E(3_1)) = H \cap T_1$, hence
$h(N'_1 \cap \del E(3_1)) = H' \cap T_1$.
\item $h(N_2 \cap \del E(4_1)) = H \cap T_2$, hence
$h(N'_2 \cap \del E(4_1)) = H' \cap T_2$.
\end{enumerate}

Note that the conditions of Proposition~\ref{pro:construct-SI} are satisfied,
and so we see that $M$ admits a strongly irreducible genus 3 Heegaard splitting.
Explicitly, the splitting surface is obtained from the bridge sphere
$P$ by attaching $\mbox{Fr}_{E(4_1)} N_2$ (that is, two once-punctured tori)
in $E(4_1)$ and $\bar{A}$ in $E(3_1)$.  Denote this splitting by $V \cup_\s
V'$, where $V$ and $V'$ are the handlebodies
$N_1 \cup H \cup N_2$ and $N_1' \cup H' \cup N_2'$ resp.,
and $\s$ is the splitting surface.

The decomposition $E(3_1) \cup E(L) \cup E(4_1)$ is the torus decomposition
for $M$. In \cite[Theorem]{kobayashi-genus-2-haken}, a complete list of
Heegaard genus 2 3-manifolds admitting non-trivial torus decomposition
is given. By
consulting that list, we see that $g(M) > 2$. Above we constructed a
strongly irreducible genus 3 Heegaard splitting for $M$.
We conclude that $g(M) = 3$, and that $M$ admits a
strongly irreducible minimal genus Heegaard splitting.

We claim that the submanifold $E(3_1) \cup E(L)$ admits a genus 2 Heegaard
splitting. Since $A_1$ is primitive in $H$ and $A_1'$ is primitive in $H'$,
$N_1 \cup H$ and $N_1' \cup H'$ are genus 2 handlebodies.
Let $A = H \cap T_2$ and $A' = H' \cap T_2$.  Let $C =
\mbox{cl}((N_1 \cup H) \setminus N(A,H))$ and $C' = (N_1' \cup H') \cup
N(A,H)$.  It is clear that $C$ is a genus 2 handlebody.
It is easy to see that
$A'$ is primitive in $N_1' \cup H'$, \ie, there is a meridian disk $\Delta'$
of $N_1' \cup H'$ such that $\Delta' \cap A'$ is an essential arc in
$A'$. This implies that $C'$ is a genus 2 compression body with $\partial_- C'
= A \cup A' = T_2$.  Denoting $\del_+ C$ by $\s'$, we see that $C \cup_{\s'} C'$
is a genus 2 Heegaard splitting of $E(3_1) \cup E(L)$.

\begin{rmkk}
\label{rmk:Genus2SplittingsLeft}
For future reference, we note the following: let $\alpha$ be a core curve of the
solid torus $N_1$ and $\alpha'$ a core curve of the solid torus $N_1'$.
By construction, $\alpha$ is contained in a spine of the handlebody $C$ and $\alpha'$ is contained in a spine of the compression body $C'$.  Similarly, the decomposition $M = \overline{C} \cup \overline{C}'$, where $\overline{C} = (N_1 \cup H) \cup N(A',H')$ and $\overline{C}' = \mbox{cl}((N_1' \cup H') \setminus N(A',H'))$, gives another (possibly isotopic) genus 2 Heegaard splitting of $E(3_1) \cup E(L)$ so that $\alpha'$ is contained in a spine of the handlebody $\overline{C}'$ and $\alpha$ is contained in a spine of the compression body $\overline{C}$.
\end{rmkk}

It is well known that $E(4_1)$ admits a genus 2 Heegaard splitting.
By amalgamating
a genus 2 Heegaard splitting for $E(4_1)$ with a genus 2 Heegaard splitting
of
$E(3_1) \cup E(L)$ we obtain a weakly reducible Heegaard splitting of $M$; by
\cite{schultens} (see also \cite[Lemma~2.7]{KR-CAG} for a more general
statement) this Heegaard splitting has genus 3.  This establishes the
existence of weakly reducible minimal genus Heegaard splittings of $M$.

This completes the proof of Theorem~\ref{thm:WRandSIHS}.

\begin{rmkk}
\label{rmk:generalize}
The following is a suggestion for a way to generalize the results of this paper.  Fix $g \geq 3$.  Let $H$ (resp. $H'$) be a genus $g-1$ handlebody, and $A_1$, $A_2 \subset \del H$ (resp. $A_1'$, $A_2' \subset \del H'$) be two primitive annuli. A construction similar to above gives handlebodies $V$, $V'$ of genus $g$.  The curve complex distance of a Heegaard splitting was defined by Hempel \cite{hempel-distance} and was generalized by several authors to bridge decompositions; note that $H \cup H'$ is a genus $g-3$, 2 bridge decomposition (for details see, for example, the proof of Proposition 2.2 of \cite{KR-AGT}).  It is reasonable to expect that if the distance of $H \cup H'$ is large, then $V \cup V'$ is strongly irreducible and minimal genus (Tomova's \cite{tomova} should be useful here).
Similar to the construction above, one obtains weakly reducible minimal genus Heegaard splittings by considering the decomposition $E(3_1) \cup H \cup H'$ and $E(4_1)$.  This would give manifolds of genus $g$, for arbitrary $g \geq 3$, admitting both weakly reducible and strongly irreducible minimal genus Heegaard splittings.
\end{rmkk}

\section{Further examples: the bounded case}
\label{sec:boundded}

Throughout this section, let $M = E(3_1) \cup E(L) \cup E(4_1)$ be any of the manifolds constructed in the previous section.  Let $V \cup_{\s} V'$ be the strongly irreducible Heegaard splitting constructed there.

Let $\beta^* \subset E(4_1)$ be the simple closed curve given in Figure~4.  
By Figure~4~(a), $\beta^*$ is contained in a once punctured torus that is a fiber of the fibration of $E(4_1)$ over $S^1$.  We may choose this fiber to be a component of $\s \cap E(4_1)$.

\begin{figure}[ht]
\begin{center}
\includegraphics[width=8cm, clip]{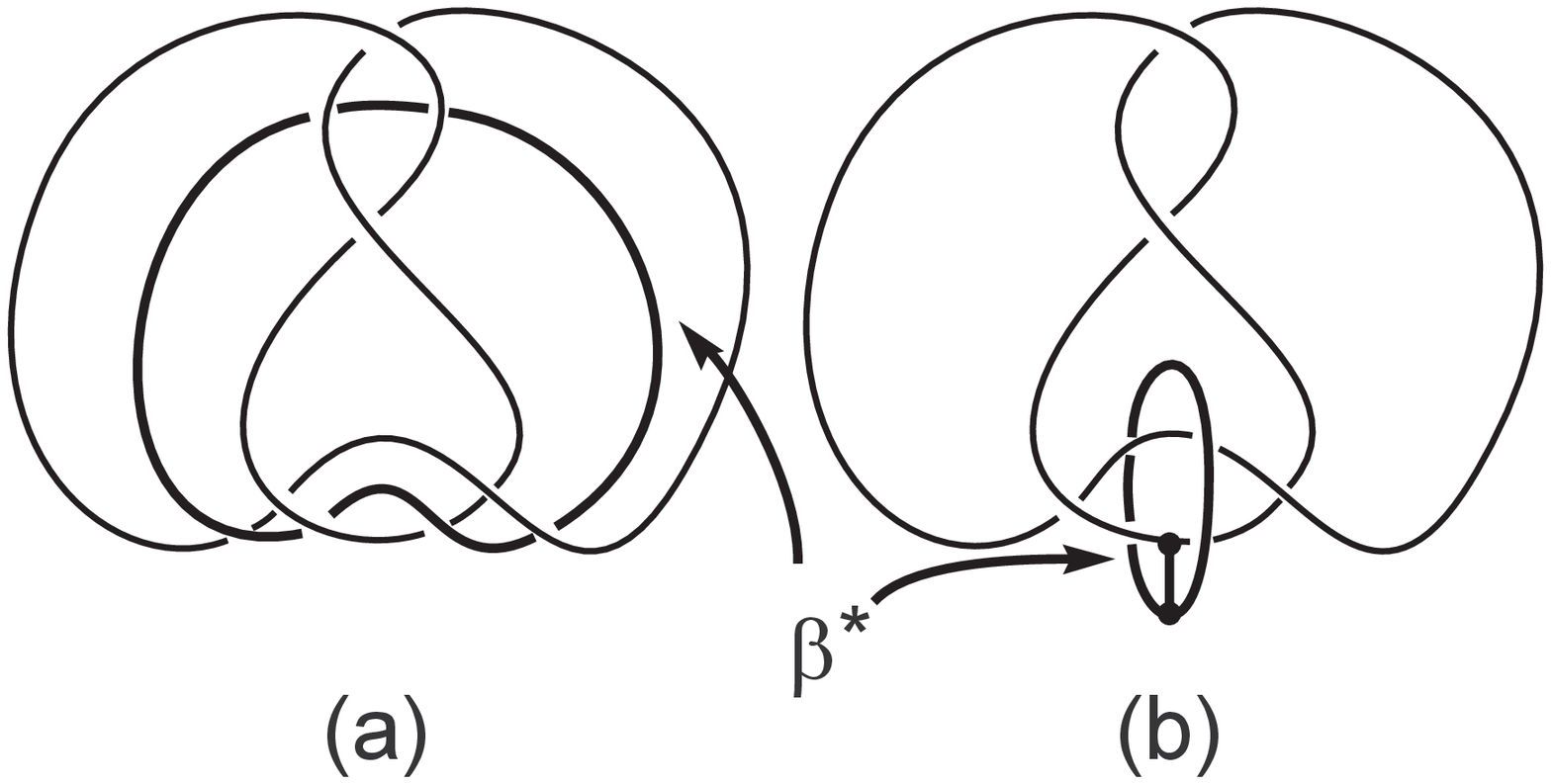}
\end{center}
\begin{center}
Figure 4.
\end{center}
\end{figure}

\begin{rmkk}
\label{rmk:beta*}
We connect $\beta^*$ to $\del E(4_1)$ by an arc as in Figure~4~(b).
It is directly observed that the exterior of a regular neighborhood
of ($\del E(4_1)$ together with the 1-complex) is a genus 2 handlebody.
This shows that
$\beta^*$ is contained in a spine of a compression body (not
handlebody) component of a genus 2 Heegaard splitting of $E(4_1)$.
\end{rmkk}

Let $\alpha$, $\alpha'$ be as in Remark~\ref{rmk:Genus2SplittingsLeft}, so that
$\alpha \subset V$ and $\alpha' \subset V'$.
Denote $\mbox{cl}(M \setminus N(\alpha \cup \beta^* \cup \alpha'))$ by $X$.
Denote the boundary components of $X$ by $T_\alpha = \del N(\alpha)$, $T_{\beta^*}
= \del N({\beta^*})$, and $T_{\alpha'} = \del N(\alpha')$.

\begin{lem}
\label{lem:WR}
$X$ admits two genus 3 weakly reducible Heegaard surfaces, denoted by $F_1$ and $F_2$, so that:
	\begin{enumerate}
	\item $F_1$ separates $T_\alpha \cup T_{\beta^*}$ and $T_{\alpha'}$.
	\item $F_2$ separates $T_{\alpha}$ and $T_{\alpha'} \cup T_{\beta^*}$.
	\end{enumerate}
\end{lem}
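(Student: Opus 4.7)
The plan is to build $F_1$ and $F_2$ by amalgamating genus 2 Heegaard splittings of $E(4_1)$ and of $E(3_1)\cup E(L)$ across the torus $T_2=\del E(4_1)=\del(E(3_1)\cup E(L))$, using Proposition~\ref{pro:SpaineAfterAmalgamation} to place $\alpha$, $\alpha'$, $\beta^*$ in spines of the appropriate handlebodies of the resulting genus 3 splitting of $M$, and then drilling out regular neighborhoods of these three curves.

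For $F_1$, fix the genus 2 Heegaard splitting $\widetilde H\cup\widetilde C$ of $E(4_1)$ supplied by Remark~\ref{rmk:beta*}, with $\beta^*$ in a spine of the compression body $\widetilde C$, and take the genus 2 splitting $C\cup C'$ of $E(3_1)\cup E(L)$ from Remark~\ref{rmk:Genus2SplittingsLeft}, in which $\alpha$ lies in a spine of the handlebody $C$ and $\alpha'$ in a spine of the compression body $C'$. Amalgamating these across $T_2$ gives a genus $2+2-1=3$ Heegaard splitting $H\cup H'$ of $M$, and Proposition~\ref{pro:SpaineAfterAmalgamation} lets us arrange (relabeling $H$, $H'$ if necessary) that a spine of $H$ contains $\alpha'$ and a spine of $H'$ contains $\alpha\cup\beta^*$. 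Since these are pairwise disjoint simple closed curves sitting in spines of their respective handlebodies, removing $N(\alpha\cup\beta^*\cup\alpha')$ converts $H$ into a compression body $W_1$ with $\del_- W_1=T_{\alpha'}$ and converts $H'$ into a compression body $W_1'$ with $\del_- W_1'=T_\alpha\cup T_{\beta^*}$; the common surface $F_1=\del_+W_1=\del_+W_1'$ is therefore a genus 3 Heegaard surface of $X$ separating $T_{\alpha'}$ from $T_\alpha\cup T_{\beta^*}$. The construction of $F_2$ is identical, except that $C\cup C'$ is replaced by the other genus 2 splitting $\overline C\cup\overline C'$ of $E(3_1)\cup E(L)$ from Remark~\ref{rmk:Genus2SplittingsLeft} (in which $\alpha'$ lies in a spine of the handlebody $\overline C'$ and $\alpha$ in a spine of the compression body $\overline C$); this places $\alpha$ in a spine on one side of the amalgamation and $\alpha'\cup\beta^*$ in a spine on the other, and drilling yields a genus 3 Heegaard surface $F_2$ of $X$ separating $T_\alpha$ from $T_{\alpha'}\cup T_{\beta^*}$.

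Finally, both $F_1$ and $F_2$ are weakly reducible: amalgamation across $T_2$ produces tubes on both sides of the gluing surface (cf.\ Figure~3), and their meridian disks lie on opposite sides of the amalgamated Heegaard surface with disjoint boundary curves on it. Since $\alpha$, $\alpha'$, $\beta^*$ lie in the interiors of $E(3_1)$ and $E(4_1)$ and are disjoint from $T_2$, we may choose these meridian disks to lie in a small neighborhood of $T_2$ and hence disjoint from $N(\alpha\cup\beta^*\cup\alpha')$, so they survive as essential disks in $W_i$ and $W_i'$ with disjoint boundaries on $F_i$, witnessing weak reducibility. The one place requiring care is the spine bookkeeping in Proposition~\ref{pro:SpaineAfterAmalgamation}, to verify that each of the two amalgamations indeed produces the desired pairing of curves into handlebodies; the remainder is the standard amalgamation-plus-drilling formalism.
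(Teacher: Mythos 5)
Your proof is correct and follows essentially the same route as the paper: both apply Proposition~\ref{pro:SpaineAfterAmalgamation} to the genus 2 splittings $C\cup C'$ (resp.\ $\overline C\cup\overline C'$) of $E(3_1)\cup E(L)$ and the splitting of $E(4_1)$ from Remark~\ref{rmk:beta*}, place $\alpha\cup\beta^*$ and $\alpha'$ (resp.\ $\alpha$ and $\alpha'\cup\beta^*$) in spines of opposite handlebodies, and drill. You merely spell out two steps the paper leaves implicit, namely that drilling curves lying in spines preserves the Heegaard structure and that the amalgamated splitting is weakly reducible via disjoint tube meridians on either side of $T_2$.
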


\begin{proof}
By applying Proposition~\ref{pro:SpaineAfterAmalgamation} to the Heegaard splitting
$C \cup C'$ (recall Remark~\ref{rmk:Genus2SplittingsLeft}) and the genus 2 Heegaard
splitting of $E(4_1)$ given in Remark~\ref{rmk:beta*} we obtain
a genus 3 Heegaard splitting of $M$ such that the Heegaard surface
separates $\alpha \cup \beta^*$ and $\alpha'$,
$\alpha \cup \beta^*$ is contained in a spine of one of the handlebodies,
and
$\alpha'$ is contained in a spine of the other handlebody.
This gives $F_1$.

Analogously, by applying Proposition~\ref{pro:SpaineAfterAmalgamation} to the Heegaard splitting
$\overline{C} \cup \overline{C}'$ (recall Remark~\ref{rmk:Genus2SplittingsLeft}) and
the genus 2 Heegaard
splitting of $E(4_1)$ given in Remark~\ref{rmk:beta*} we obtain $F_2$.
\end{proof}

\begin{lem}
$g(X) = 3$.
\end{lem}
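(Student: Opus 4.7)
The plan is to pin $g(X)$ to $3$ by establishing matching upper and lower bounds. The upper bound $g(X) \le 3$ is already in hand from Lemma~\ref{lem:WR}, which exhibits the genus $3$ Heegaard surface $F_1$ (or $F_2$) for $X$; so the real task is the lower bound $g(X) \ge 3$.

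For the lower bound I would appeal to the fact that Heegaard genus cannot strictly increase under Dehn filling along a toroidal boundary component. Here one observes that $M$ is recovered from $X$ by Dehn filling the three boundary tori $T_\alpha$, $T_{\beta^*}$, $T_{\alpha'}$ back along the meridians of $\alpha$, $\beta^*$, $\alpha'$, respectively. The underlying mechanism is immediate from the definition of a compression body: given a Heegaard splitting $X = A \cup_S B$ into compression bodies, each toroidal boundary component of $X$ lies either in $\del_- A$ or in $\del_- B$; attaching a solid torus to such a torus $T \subset \del_- A$ along any slope is equivalent to attaching a $2$-handle to $A$ that kills the filling meridian together with a $3$-ball capping off the resulting $2$-sphere, so the result is again a compression body with the same positive boundary $S$. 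Iterating this observation over the three fillings, any genus $g$ Heegaard surface for $X$ survives to a genus $g$ Heegaard surface for $M$, hence $g(M) \le g(X)$.

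Since $g(M) = 3$ was established in the proof of Theorem~\ref{thm:WRandSIHS}, this forces $g(X) \ge 3$, and combined with the upper bound yields $g(X) = 3$. I do not anticipate any real obstacle here: the proof reduces entirely to the Dehn filling inequality, which is a routine consequence of the definition of a compression body, together with the observation that the three curves $\alpha, \beta^*, \alpha'$ are genuinely disjointly embedded in $M$ (so that $X$ really does have the three advertised toroidal boundary components). The latter is clear by construction, since $\alpha$ and $\alpha'$ are cores of the disjoint solid tori $N_1, N_1' \subset E(3_1)$, while $\beta^*$ lies in the interior of $E(4_1)$.
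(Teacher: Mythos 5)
Your proposal is correct and follows exactly the paper's argument: the upper bound comes from the genus $3$ surface $F_1$ of Lemma~\ref{lem:WR}, and the lower bound from the fact that $M$ is obtained from $X$ by Dehn filling, so $g(X) \geq g(M) = 3$. The paper states the Dehn filling inequality without the compression-body justification you supply, but the route is identical.
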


\begin{proof}
Since $M$ is obtained from $X$ by Dehn filling, we have that
$g(X) \geq g(M) = 3$.  On the other hand, $F_1$ is a genus 3 Heegaard
surface for $X$, showing that $g(X) \leq g(F_1) = 3$.
\end{proof}

\begin{dfn}
Let $C$ be a compression body, and $\alpha_1,\dots,\alpha_n \subset C$
simple closed curves.
We say that $\alpha_1,\dots,\alpha_n$ are {\it simultaneous cores} if the following two conditions hold:
	\begin{enumerate}
	\item There exist mutually disjoint annuli $A_1,\dots,
          A_n \subset C$ so that one component of $\del A_i$ is
          $\alpha_i$ and the other is on $\del_+ C$.
	\item There exist mutually disjoint meridian disks $D_1,\dots,
          D_n \subset C$ so that $\alpha_i$ intersects $D_i$ transversely
          in one point and for $i \neq j$, $\alpha_i \cap D_j = \emptyset$.
	\end{enumerate}
\end{dfn}

\begin{rmkk}
\label{rmk:SimCores}
It is easy to see that $\alpha_1,\dots,\alpha_n \subset C$ are simultaneous
cores if and only if $\mbox{cl}(C \setminus N(\cup_{i=1}^n \alpha_i))$
is a compression body.
\end{rmkk}

Recall that $\beta^* \subset \s \cap E(4_1)$.
Let $\beta$ (resp. $\beta'$) be a curve obtained by pushing $\beta^*$
slightly into $V$ (resp. $V'$).

\begin{lem}
\label{lem:SimCores}
The curves $\alpha$, $\beta \subset V$ and $\alpha'$, $\beta' \subset V'$ are
simultaneous cores.
\end{lem}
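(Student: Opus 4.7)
The plan is to construct annuli $A_\alpha, A_\beta \subset V$ and meridian disks $D_\alpha, D_\beta \subset V$ directly verifying the two conditions of the definition; the argument for $\alpha', \beta' \subset V'$ is entirely analogous. The construction will be symmetric on the two sides of $V = N_1 \cup H \cup N_2$ and uses the primitive disks $\Delta_1, \Delta_2 \subset H$ from the 2-string tangle construction. Property~(2) of the primitive pair --- namely $\Delta_1 \cap \Delta_2 = \emptyset$, $\Delta_1 \cap A_2 = \emptyset$, and $\Delta_2 \cap A_1 = \emptyset$ --- will localize the constructed objects on the two sides in disjoint regions of $V$ and deliver every required disjointness relation essentially for free.

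On the $\beta$-side the annulus $A_\beta$ is automatic: since $\beta$ is a push-off of $\beta^* \subset \Sigma = \partial V$, the product annulus in a collar of $\partial V$ on the $V$-side supplies $A_\beta$, supported in $N_2$ near $\partial_\pm N_2 \subset \Sigma \cap E(4_1)$. For $D_\beta$, pick an essential arc $\alpha^* \subset S$ on the once-punctured torus fiber with $\partial \alpha^* \subset \partial S$ and $|\alpha^* \cap \beta^*| = 1$ (such an arc exists because $\beta^*$ is essential and non-peripheral on $S$). Then $\widetilde D := \alpha^* \times [0,1] \subset N_2 = S \times [0,1]$ meets $\beta$ in exactly one point and has exactly two essential arcs of $\partial \widetilde D$ on $A_2 = \partial S \times [0,1]$. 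Take two disjoint parallel copies $\Delta_2^{(1)}, \Delta_2^{(2)}$ of $\Delta_2$ in $H$ with their $A_2$-arcs aligned to those of $\partial \widetilde D$, and boundary-sum $\widetilde D$ with them to produce $D_\beta$: a properly embedded disk in $V$ with $\partial D_\beta \subset \partial V$ and $|D_\beta \cap \beta| = 1$. On the $\alpha$-side the construction is completely parallel, using $\Delta_1$ in place of $\Delta_2$: starting from a meridian disk (resp.~cocore annulus) of the solid torus $N_1$ --- which meets $\alpha$ once (resp.~has $\alpha$ as one boundary) and whose relevant boundary has some $k \geq 1$ essential arcs on $\mathcal A_1 = A_1$ --- and boundary-summing with $k$ parallel copies of $\Delta_1$ yields $D_\alpha$ (resp.~$A_\alpha$) with boundary on $\partial V$. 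Remark~\ref{rmk:Genus2SplittingsLeft} also provides a conceptual shortcut for the $\alpha$-side: since $\alpha$ lies in a spine of the genus~2 handlebody $C = \mbox{cl}((N_1 \cup H) \setminus N(A_2, H))$, $\alpha$ is a core of $C$ and $A_\alpha, D_\alpha$ can be taken directly inside $C$ with boundaries on $\partial C \cap \partial V$.

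Essentiality of $\partial D_\alpha, \partial D_\beta$ on $\partial V$ follows from a standard irreducibility argument: a null-homotopic boundary would cap off with its spanning disk in $\partial V$ to a 2-sphere bounding a ball in the irreducible handlebody $V$, which the closed curve $\alpha$ (resp.~$\beta$) would meet algebraically once, a contradiction. All required disjointness relations then follow from support: $A_\alpha, D_\alpha$ lie in $N_1 \cup N(\Delta_1, H) \subset C$, while $A_\beta$ lies in $N_2$ near $\partial_\pm N_2$ and $D_\beta$ lies in $N_2 \cup N(\Delta_2, H)$. Property~(2) ensures that $N(\Delta_1, H) \cap N(\Delta_2, H) = \emptyset$ for sufficiently small regular neighborhoods, and since $N_1 \cap N_2 = \emptyset$, the two sides are supported on disjoint regions of $V$, giving $A_\alpha \cap A_\beta = D_\alpha \cap D_\beta = D_\alpha \cap \beta = D_\beta \cap \alpha = \emptyset$ at once. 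The main obstacle is the bookkeeping in the parallel boundary-sum constructions --- aligning the $\Delta_i^{(j)}$-copies with the $A_i$-arcs of the starting disks and annuli so the result is a genuine properly embedded surface of the expected topological type (a disk or an annulus, rather than something with extra components or self-crossings) --- but Property~(2) of the primitive pair is precisely what guarantees the two sides do not interfere.
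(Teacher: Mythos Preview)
Your proof is correct and follows essentially the same approach as the paper: both construct $D_\beta$ by taking a vertical disk $\gamma \times [0,1]$ in $N_2 = S \times [0,1]$ (with $\gamma$ an arc meeting $\beta^*$ once) and boundary-summing with two copies of $\Delta_2$, and construct $D_\alpha$ by taking a meridian disk of the solid torus $N_1$ and boundary-summing with copies of $\Delta_1$, with disjointness coming from $\Delta_1 \cap \Delta_2 = \emptyset$ and $N_1 \cap N_2 = \emptyset$. Your write-up is more explicit about the annuli $A_\alpha, A_\beta$ and about why $\partial D_\alpha, \partial D_\beta$ are essential, whereas the paper simply asserts these as ``easy to see''; the underlying construction is the same.
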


\begin{proof}
Recall the definition of the handlebody $H = V \cap E(L)$ given in
Section~\ref{sec:prelim}, and let $\Delta_1$, $\Delta_2$ be the meridian disks of
$H$ shown in Figure~1.
Let $\widehat{D}_{\alpha}$ be a meridian disk of the solid torus $N_1 = V \cap E(3_1)$ that intersects the annulus $\bar{A} = \Sigma \cap E(3_1)$ essentially.  By attaching two copies of $\Delta_1$ to $\widehat{D}_{\alpha}$ we obtain a meridian disk for $V$, denoted by $D_{\alpha}$, that intersects $\alpha$ once and is disjoint from $\beta$.

Recall that $V \cap E(4_1) (=N_2)$ is homeomorphic to $S \times [0,1]$, where $S$ is a once
punctured torus.  We may suppose that $\beta$
corresponds to a curve $\beta_{S} \times \{ 1/2 \}$, where $\beta_{S}$
is an essential curve on $S$.
Let $\widehat{D}_{\beta}$ be a vertical disk in $V \cap E(4_1)$ that
intersects $\beta$ once, that is,  $\widehat{D}_{\beta}$ corresponds to a disk of the
form $\gamma \times [0,1]$, where $\gamma$ is an arc properly embedded in $S$
that intersects $\beta_{S}$ transversely once.  By attaching two copies of
$\Delta_2$ to $\widehat{D}_{\beta}$ we obtain a meridian disk for $V$, denoted by
$D_{\beta}$, that intersects $\beta$ once and is disjoint from $\alpha$.

It is easy to see that  $D_{\alpha} \cap D_{\beta} = \emptyset$, and that
there exist a pair of disjoint annuli, say $A_{\alpha}$ and
$A_{\beta}$, so that one component of $\del A_{\alpha}$ is $\alpha$ and the
other is on $\del V$ and one component of $\del A_{\beta}$ is $\beta$ and the
other is on $\del V$.   Hence, $\alpha$ and $\beta$ are simultaneous cores.

The curves $\alpha'$ and $\beta'$ are treated similarly.
\end{proof}

\begin{thm}
\label{thm:boundded}
For $i=1,2,3$, there exists infinitely many manifolds $M_i$ so that $\del M_i$ consists of exactly $i$ tori, $g(M_i) = 3$,
and each $M_i$ admits both strongly irreducible and weakly reducible
minimal genus Heegaard splittings.

Moreover, each manifold $M_2$ admits four distinct minimal genus Heegaard
surfaces, denoted $F_{SI}^{1,1}$, $F_{WR}^{1,1}$,  $F_{SI}^{2,0}$, $F_{WR}^{2,0}$, so that the following four conditions hold.
\begin{enumerate}
\item The Heegaard splittings given by $F_{SI}^{1,1}$ and
$F_{SI}^{2,0}$ are strongly irreducible.
\item The Heegaard splittings given by
$F_{WR}^{1,1}$ and $F_{WR}^{2,0}$ are weakly reducible.
\item $F_{SI}^{1,1}$ and $F_{WR}^{1,1}$ separate the two boundary components of $M_2$.
\item $F_{SI}^{2,0}$ and $F_{WR}^{2,0}$ do not separate the boundary components of $M_2$.
\end{enumerate}
\end{thm}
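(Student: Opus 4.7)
The plan is to set $M_3 := X$, and to obtain $M_2$ and $M_1$ by Dehn filling one or two of the three boundary tori $T_\alpha$, $T_{\beta^*}$, $T_{\alpha'}$ of $X$ along the slopes that restore the drilled solid tori.  Infinitely many manifolds in each of the three families arise from the infinite family of hyperbolic 2-bridge links $L$ used in the construction.  The lower bound $g(M_i) \geq 3$ holds in each case because $M$ is obtained from $M_i$ by further Dehn filling and Heegaard genus does not increase under Dehn filling, while the upper bound $g(M_i) \leq 3$ is witnessed by the explicit genus 3 Heegaard surfaces produced below.

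For the strongly irreducible surfaces, I would begin with $\s$ and isotope $\beta^* \subset \s$ slightly either to $\beta \subset V$ or to $\beta' \subset V'$ before drilling.  By Lemma~\ref{lem:SimCores} and Remark~\ref{rmk:SimCores}, drilling any subset of the simultaneous cores $\{\alpha, \beta\}$ of $V$ (respectively $\{\alpha', \beta'\}$ of $V'$) yields a compression body.  Hence in each $M_i$ the surface $\s$ separates the manifold into two compression bodies (or a compression body and a handlebody), giving a genus 3 Heegaard surface.  Strong irreducibility is inherited from $\s \subset M$: any meridian disk of a compression body of the form $V \setminus N(\text{cores})$ is, as a properly embedded disk in $V$ with essential boundary on $\s = \del V$, a meridian disk of the original handlebody $V$, and analogously on the $V'$-side; hence any two meridian disks from opposite sides of $\s$ must intersect, by strong irreducibility of $\s$ in $M$ (Proposition~\ref{pro:construct-SI}).

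For the weakly reducible surfaces I would use $F_1$ and $F_2$ from Lemma~\ref{lem:WR}, noting that Dehn filling preserves weak reducibility since a pair of disjoint essential compressing disks on opposite sides persists under filling.  Take $M_2$ to be $X$ Dehn filled along $T_{\alpha'}$.  By Lemma~\ref{lem:WR}(1), $F_1$ places $T_\alpha$ and $T_{\beta^*}$ on the same side in $M_2$ and so plays the role of $F_{WR}^{2,0}$, while by Lemma~\ref{lem:WR}(2), $F_2$ separates $T_\alpha$ from $T_{\beta^*}$ in $M_2$ and so plays the role of $F_{WR}^{1,1}$.  Analogously, $F_{SI}^{2,0}$ is the copy of $\s$ obtained by pushing $\beta^*$ to $\beta \subset V$ (so that both drilled curves lie on the $V$-side), and $F_{SI}^{1,1}$ is the copy obtained by pushing $\beta^*$ to $\beta' \subset V'$ (so the drilled curves lie on opposite sides of $\s$).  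The four surfaces are pairwise non-isotopic because the strongly/weakly reducible dichotomy and the separation behavior on $\del M_2$ are isotopy invariants.  For $M_1$ (obtained by additionally filling, say, $T_\alpha$) the same recipe produces one strongly irreducible and one weakly reducible minimal genus Heegaard surface, and for $M_3 = X$ the surfaces $F_1$, $F_2$, and $\s$ already suffice.

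The main obstacle is verifying that the pushed-and-drilled copies of $\s$ remain strongly irreducible in the bounded manifolds; the argument above relies essentially on the simultaneous core property of Lemma~\ref{lem:SimCores} together with the inherited strong irreducibility from Proposition~\ref{pro:construct-SI}.  A secondary bookkeeping task is to track, for each $M_i$ and for each of $F_1$, $F_2$, and the two copies of $\s$, which boundary tori lie on which side; this reduces to reading off Lemma~\ref{lem:WR} and the choice of push for $\beta^*$.
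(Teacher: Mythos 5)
Your proposal is correct and follows essentially the same route as the paper: take $M_3 = X$ and obtain $M_2$, $M_1$ by filling boundary tori, get the weakly reducible surfaces from $F_1$, $F_2$ of Lemma~\ref{lem:WR}, and get the strongly irreducible ones by pushing $\beta^*$ to $\beta$ or $\beta'$ and invoking Lemma~\ref{lem:SimCores} together with the persistence of strong irreducibility under drilling cores (your inline argument is exactly the content of Lemma~\ref{lem:induced}). The only deviations are cosmetic (e.g.\ which two tori you fill to form $M_1$), and your explicit remark that the four surfaces in $M_2$ are pairwise non-isotopic is a small point the paper leaves implicit.
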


Before proving Theorem~\ref{thm:boundded} we give the following definition.

\begin{dfn}
Let $Y_1$ and $Y_2$ be manifolds so that $Y_1$ is obtained from $Y_2$ by Dehn filling
(equivalently, $Y_2$ is obtained from $Y_1$ by removing an open regular neighborhood
of a link in it). Note that $Y_2 \subset Y_1$.

Let $\s_2 \subset Y_2$ be any Heegaard surface.  Then $\s_2$ is a Heegaard
surface of $Y_1$.  We say that $\s_2 \subset Y_1$ is
an {\it induced Heegaard surface} (or the {\it Heegaard surface induced by} $\s_2$).

Let $\s_1 \subset Y_1$ be a Heegaard
surface.  Suppose that $\s_1 \subset Y_2$ and that $\s_1$ is a Heegaard surface of $Y_2$.
We say that $\s_1 \subset Y_2$ is
an {\it induced Heegaard surface} (or the {\it Heegaard surface induced by} $\s_1$.)
\end{dfn}

The proof of the following lemma is easy and left to the reader:

\begin{lem}
\label{lem:induced}
Let $Y_1$ and $Y_2$ be as above.  If a Heegaard surface
$\s_2$ of $Y_2$ is weakly reducible,
then so is the induced Heegaard surface.  On the other hand, if $\s_1
\subset Y_1$ is a strongly irreducible Heegaard surface that induces a Heegaard surface
for $Y_2$, then the induced Heegaard surface is strongly irreducible.
\end{lem}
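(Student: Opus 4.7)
The plan is to handle the two assertions separately, both by tracking what happens to essential compressing disks and verifying that the ambient compression body structure is preserved under the relevant operation. For the first assertion, start with a weakly reducible Heegaard splitting $Y_2 = C_1 \cup_{\s_2} C_2$, so there exist disjoint essential disks $D_i \subset C_i$ for $i=1,2$. The induced Heegaard decomposition of $Y_1$ is $Y_1 = \widetilde{C}_1 \cup_{\s_2} \widetilde{C}_2$, where each $\widetilde{C}_i$ is obtained from $C_i$ by attaching solid tori—the filling solid tori of the Dehn filling on the torus components of $\partial N(L) \subset \partial_- C_i$. First I would verify that each $\widetilde{C}_i$ is still a compression body; this follows from the observation that attaching a solid torus along a torus component of $\partial_- C$ is equivalent, up to isotopy, to attaching a $2$-handle followed by a $3$-handle to $\partial_+ C \times \{1\}$, so the compression body handle structure extends. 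Then the original disks $D_1,D_2$ sit in $\widetilde{C}_1,\widetilde{C}_2$ with boundaries on the unchanged positive boundary $\s_2$; since essentiality of a disk in a compression body is detected by its boundary being essential on $\partial_+$, they remain essential and disjoint, witnessing weak reducibility in $Y_1$.

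For the second assertion, suppose $\s_1 \subset Y_1$ is a strongly irreducible Heegaard surface with $Y_1 = C_1 \cup_{\s_1} C_2$, and assume $\s_1 \subset Y_2$ is also a Heegaard surface, giving $Y_2 = C_1' \cup_{\s_1} C_2'$ with $C_i' = C_i \setminus N(L \cap C_i) \subset C_i$. My approach is contrapositive: if $\s_1$ were weakly reducible in $Y_2$, I would take witnessing disjoint essential disks $D_1' \subset C_1'$ and $D_2' \subset C_2'$. Each $D_i'$ is properly embedded in $C_i'$ with $\partial D_i' \subset \partial_+ C_i' = \s_1$, and essential means $\partial D_i'$ is essential on $\s_1$. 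Viewing $D_i'$ as a disk in the larger compression body $C_i$, its boundary lives on the same surface $\s_1 = \partial_+ C_i$ and is still essential there, so $D_i' \subset C_i$ is an essential disk. Disjointness is automatic. Thus $D_1',D_2'$ would exhibit weak reducibility of $\s_1$ in $Y_1$, contradicting strong irreducibility.

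The main subtlety is the verification in the first part that Dehn filling a torus component of $\partial_-$ of a compression body yields a compression body, since if this fails the induced surface need not be a Heegaard surface at all. I would justify this carefully by the following handle-theoretic picture: realize $C_i$ as $(\partial_+ C_i \times I)$ with $1$-handles attached on the $\partial_+ C_i \times \{0\}$ side and caps, and observe that a Dehn filling on a torus component of $\partial_-$ simply adds a $2$-handle (along the filling slope) and a $3$-handle to cap off, producing again a compression body with the same $\partial_+$ and with the filled torus removed from $\partial_-$. With this verified, the rest of the argument is mere bookkeeping about which disks live where and that essentiality is a condition purely on boundary curves on the positive boundary.
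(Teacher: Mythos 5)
The paper offers no proof of this lemma (it is explicitly ``left to the reader''), so there is nothing to diverge from; your argument is correct and is the standard one the authors clearly intend. Both halves are sound: the disjoint essential disks witnessing weak reducibility of $\s_2$ persist as essential disks after the filling solid tori (a $2$-handle plus a $3$-handle each) are absorbed into the compression bodies, since essentiality is detected on the unchanged $\del_+$; and in the other direction a pair of disjoint compressing disks for the induced splitting of $Y_2$ would already be a pair of disjoint compressing disks for $\s_1$ in $Y_1$, contradicting strong irreducibility. Your care in checking that Dehn filling a torus component of $\del_-C$ again yields a compression body is appropriate, though note that in the paper this fact is folded into the definition of an induced Heegaard surface rather than into the lemma itself.
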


\begin{proof}[Proof of Theorem~\ref{thm:boundded}]
We deal with the cases $i = 1$, $i = 2$ and $i = 3$ in increasing order of difficulty.

\smallskip

\noindent For $i = 3$, let $M_3 = X$.  Then by Lemma \ref{lem:WR}, $g(X) = 3$ and
$X$ admit a weakly reducible minimal genus Heegaard splitting.

Note that $\beta^*$
is isotopic to $\beta$; hence $X$ is homeomorphic to
$\mbox{cl}(M \setminus N(\alpha \cup \alpha' \cup \beta))$.
By Lemma \ref{lem:SimCores} and Remark \ref{rmk:SimCores}, $V \cup V'$ induces
a genus 3 Heegaard splitting of
$\mbox{cl}(M \setminus N(\alpha \cup \alpha' \cup \beta))$.  Since
$V \cup V'$ is strongly irreducible, Lemma~\ref{lem:induced} shows that
the induced Heegaard splitting is  strongly irreducible.  The
case $i = 3$ follows.

\bigskip

\noindent For $i=1$, let $M_1 = \mbox{cl}(M \setminus N(\alpha))$.
Then $g(M_1) \geq g(M) = 3$.  Since $X$ is obtained from $M_1$ by removing an
open neighborhood of $\alpha'$ and $\beta^*$, $g(M_1) \leq g(X) = 3$.  We see that
$g(M_1) = 3$.

Note that $M_1$ is obtained by filling two boundary components of $X$.  Hence the genus 3 weakly reducible Heegaard splittings for $X$ given in Lemma \ref{lem:WR} induces genus 3 weakly reducible Heegaard splittings for $M_1$.

By Lemma \ref{lem:SimCores} and Remark \ref{rmk:SimCores}, $V \cup V'$ induces a genus 3
Heegaard splitting for $M_1$.  As above, the induced Heegaard splitting is strongly
irreducible.  The case $i=1$ follows.

\bigskip

\noindent For $i=2$, let $M_2 = \mbox{cl}(M \setminus N(\alpha \cup \beta^*))$.
Similar to $M_1$, it is easy to see that $g(M_2) = 3$.

By Lemma \ref{lem:induced}, each of the two genus 3 weakly reducible Heegaard splittings given in Lemma~\ref{lem:WR} induces a genus 3 weakly reducible Heegaard splitting on $M_2$, one not separating the components of $\del M_2$ (corresponding to Lemma \ref{lem:WR} (1)),
and the other separating them (corresponding to Lemma \ref{lem:WR} (2)).  These are the surfaces $F_{WR}^{2,0}$ and $F_{WR}^{1,1}$ in the theorem.

Note that $\beta$ ($\beta'$ resp.) is isotopic to $\beta^*$; hence, $M_2$ is homeomorphic to
$\mbox{cl}(M \setminus N(\alpha \cup \beta))$
($\mbox{cl}(M \setminus N(\alpha \cup \beta'))$ resp.).
By Lemma~\ref{lem:SimCores}, $V \cup V'$ induces a Heegaard splitting for
$\mbox{cl}(M \setminus N(\alpha \cup \beta))$
that does not separate the boundary components of
$\mbox{cl}(M \setminus N(\alpha \cup \beta))$.  The corresponding Heegaard surface for $M_2$ is the surface $F_{SI}^{2,0}$.
Similarly, by Lemma~\ref{lem:SimCores}, $V \cup V'$ induces a Heegaard splitting for
$\mbox{cl}(M \setminus N(\alpha \cup \beta'))$
that separates the boundary components of
$\mbox{cl}(M \setminus N(\alpha \cup \beta'))$.  The corresponding Heegaard surface for $M_2$ is the surface $F_{SI}^{1,1}$.
By Lemma~\ref{lem:induced}, $F_{SI}^{2,0}$ and $F_{SI}^{1,1}$
are strongly irreducible.  The case $i=2$ follows.
\end{proof}


\providecommand{\bysame}{\leavevmode\hbox to3em{\hrulefill}\thinspace}
\providecommand{\MR}{\relax\ifhmode\unskip\space\fi MR }
\providecommand{\MRhref}[2]{%
  \href{http://www.ams.org/mathscinet-getitem?mr=#1}{#2}
}
\providecommand{\href}[2]{#2}

\end{document}